\numberwithin{equation}{section}
\newtheorem{Theorem}{Theorem}[section]
\newtheorem{Proposition}[Theorem]{Proposition}
 { \theoremstyle{definition}
\newtheorem{Remark}[Theorem]{Remark} }
\begin{document}


\renewcommand{\thefootnote}{$\star$}

\newcommand{\arXivNumber}{1511.07056}

\renewcommand{\PaperNumber}{020}

\FirstPageHeading

\ShortArticleName{Orthogonal Polynomials on the Unit Ball and Fourth-Order Partial Dif\/ferential Equations}

\ArticleName{Orthogonal Polynomials on the Unit Ball \\
and Fourth-Order Partial Dif\/ferential Equations\footnote{This paper is a~contribution to the Special Issue
on Orthogonal Polynomials, Special Functions and Applications.
The full collection is available at \href{http://www.emis.de/journals/SIGMA/OPSFA2015.html}{http://www.emis.de/journals/SIGMA/OPSFA2015.html}}}

\Author{Clotilde MART\'{I}NEZ and Miguel A.~PI\~NAR}
\AuthorNameForHeading{C.~Mart\'{\i}nez and M.A.~Pi\~nar}
\Address{Departamento de Matem\'atica Aplicada, Universidad de Granada, 18071 Granada, Spain}
\Email{\href{mailto:clotilde@ugr.es}{clotilde@ugr.es}, \href{mailto:mpinar@ugr.es}{mpinar@ugr.es}}
\URLaddress{\url{http://www.ugr.es/local/mpinar}}

\ArticleDates{Received November 24, 2015, in f\/inal form February 18, 2016; Published online February 23, 2016}	

\Abstract{The purpose of this work is to analyse a family of mutually
orthogonal polynomials on the unit ball with respect to an inner
product which includes an additional term on the sphere.
First, we will get connection formulas relating
classical multivariate orthogonal polynomials on the ball
with our family of orthogonal polynomials.
Then, using the representation of these polynomials in terms of
spherical harmonics, algebraic and dif\/ferential
properties will be deduced.}

\Keywords{multivariate orthogonal polynomials; unit ball; partial dif\/ferential equations}

\Classification{33C50; 42C10}

\renewcommand{\thefootnote}{\arabic{footnote}}
\setcounter{footnote}{0}

\vspace{-2mm}

\section{Introduction}

In 1940 (see \cite{Krall40}), H.L.~Krall classif\/ied orthogonal polynomials
satisfying fourth-order dif\/ferential equations. Apart from the classical orthogonal
polynomials we can only f\/ind three other families, the now called Jacobi-type, Legendre-type and
Laguerre-type polynomials. The corresponding orthogonality measures are some
particular cases of classical measures modif\/ied by the addition of a Dirac delta
at the end points of the interval of orthogonality. Further details on those families
of polynomials were given later by A.M.~Krall~\cite{Krall81},
L.L.~Littlejohn \cite{LL82} and T.H.~Koornwinder~\cite{K84}.

Our main goal in this work is to extend the study of orthogonal polynomials
satisfying fourth-order dif\/ferential equations to a~multidimensional context.
In particular, we consider a~nonclassical weight function supported on
the $d$-dimensional unit ball and we will show that the corresponding orthogonal
polynomials will satisfy a fourth-order partial dif\/ferential equation.

In our study, classical orthogonal polynomials on the unit ball $\mathbb{B}^d$ of $\mathbb{R}^d$
play an essential role. They are orthogonal with respect to the inner product
\begin{gather*}
 \langle f, g\rangle_\mu : = \frac{1}{\omega_\mu}\int_{\mathbb{B}^d} f(x) g(x) W_\mu(x) dx,
\end{gather*}
where $W_\mu(x)$ is the weight function given by
\begin{gather*}
W_\mu(x): = \big(1-\|x\|^2\big)^{\mu-1/2}
\end{gather*}
on $\mathbb{B}^d$, $\mu > -1/2$, and
$\omega_\mu$ is a normalizing constant such that $\langle 1 ,1 \rangle_\mu = 1$.
The condition $\mu > -1/2$ is necessary for the inner product to be well def\/ined.

In the present paper, we will consider orthogonal polynomials with respect to the inner product
\begin{gather} \label{eq:main-ip}
 \langle f ,g \rangle_{\mu}^{\lambda}
 = \frac{1}{\omega_\mu} \int_{\mathbb{B}^d} f(x) g(x) W_\mu(x) dx +
 \frac{\lambda}{\sigma_{d-1}} \int_{\mathbb{S}^{d-1}} f(x) g(x) d\sigma,
\end{gather}
where $\lambda>0$, $d\sigma$ \looseness=-1 denotes the surface measure on the unit sphere $\mathbb{S}^{d-1}$,
and~$\sigma_{d-1}$ denotes the area of~$\mathbb{S}^{d-1}$.
The presence of the extra spherical term in~\eqref{eq:main-ip} changes
the importance of the values on the sphere of a given function~$f$ when
approximating by means of the corresponding Fourier series.

Using spherical harmonics {and polar coordinates}
we shall construct explicitly a sequence of mutually orthogonal
polynomials with respect to $ \langle f ,g \rangle_{\mu}^{\lambda}$, which depends
on a family of polynomials of one variable. The latter ones can be expressed
in terms of Jacobi polynomials. A~similar construction was used in~\cite{DFLPP2015} and~\cite{PPX2013} to obtain a sequence of mutually orthogonal
polynomials with respect to a Sobolev inner product. In~\cite{DFLPP2015}, the
inner product includes the outward normal derivatives over the sphere and
aside from getting the connection with classical polynomials on the ball,
some asymptotic properties are obtained. On the other hand, the Sobolev inner product
in~\cite{PPX2013} was def\/ined in terms of the standard gradient operator.

Then, in the case $\mu = 1/2$, we will
show that the real space of polynomials
in $d$ variables admits a mutually orthogonal base whose elements are eigenfunctions
of a fourth-order linear partial dif\/ferential operator where the coef\/f\/icients
{are independent of} the degree of the polynomials.
{The corresponding eigenvalues depend on the dif\/ferent indices of the polynomials.}
{Thus, we get a} non-trivial example of a family of
multivariate polynomials
{orthogonal with respect a measure and}
satisfying a fourth-order dif\/ferential equation.

Multivariate polynomials satisfying fourth-order dif\/ferential
equations were constructed previously in \cite[Section 6.2]{Iliev2011}. In this work,
the author uses also polar coordinates and spherical harmonics to construct
a basis of polynomials which are eigenfunctions of a dif\/ferential operator
where the polynomial coef\/f\/icients are independent of the indexes
of the polynomials. The eigenvalues in \cite{Iliev2011} depend only on
the total degree of the polynomials. However, the polynomials are
orthogonal with respect to an inner product involving the spherical Laplacian,
that is, a~Sobolev product (see \cite[equation~(6.8)]{Iliev2011}).
Our approach is dif\/ferent, we start from the standard inner product def\/ined
in \eqref{eq:main-ip} and we f\/ind a fourth-order partial dif\/ferential
operator having the orthogonal polynomials as eigenfunctions.

The paper is organized as follows. In the next section, we state some materials on orthogonal
polynomials on the unit ball and spherical harmonics that we will need later. In Section~\ref{section3},
using spherical harmonics, a basis of mutually orthogonal polynomials associated to~\eqref{eq:main-ip} is constructed. In Section~\ref{section4}, in the case $\mu= 1/2$, we consider the dif\/ferential
properties of the polynomials in the radial parts of the polynomials in the previous section. Finally, in
Section~\ref{section5}, we show that these polynomials are eigenfunctions of a fourth-order dif\/ferential
operator where the coef\/f\/icients do not depend on the degree of the polynomials.

\section{Classical orthogonal polynomials on the ball}\label{section2}

In this section we describe background materials on orthogonal polynomials and spherical harmonics.
The f\/irst subsection collects properties on the Jacobi polynomials that
we shall need later. The second subsection is devoted to the Jacobi-type
orthogonal polynomials, associated to a weight function obtained as a
linear combination of the classical Jacobi weight function and a delta
function at $1$. The third subsection recalls the basic results on spherical harmonics
and classical orthogonal polynomials on the unit ball.

\subsection{Classical Jacobi polynomials}

We collect some properties of the classical Jacobi polynomials $P_n^{(\alpha,\beta)}(t)$,
all of them can be found in \cite{Sz}.
For $\alpha, \beta > -1$, these
polynomials are orthogonal with respect to the Jacobi inner product
\begin{gather*}
\left(f,g\right)_{\alpha,\beta}=\int_{-1}^1f(t) g(t) (1-t)^\alpha(1+t)^{\beta} dt.
\end{gather*}
The Jacobi polynomial
$P_n^{(\alpha, \beta)}(t)$ is normalized by
\begin{gather*} 
P_n^{(\alpha,\beta)}(1) = \binom{n+\alpha}{n} = \frac{(\alpha+1)_n}{n!}.
\end{gather*}

The derivative of a Jacobi polynomial is again a Jacobi polynomial
\begin{gather}\label{derJ}
\frac{d}{d t} P_n^{(\alpha,\beta)}(t) = \frac{n+\alpha + \beta+1}{2} P_{n-1}^{(\alpha+1,\beta+1)}(t).
\end{gather}
The polynomials $P_n^{(\alpha, \beta)}(t)$ satisfy the second-order dif\/ferential equation
\begin{gather}\label{diffeqJ}
\big(1-t^2\big)y''(t) + [\beta - \alpha -(\alpha + \beta + 2)t] y'(t) = -n(n+\alpha+\beta+1)y(t),
\end{gather}
where $y(t) = P_n^{(\alpha,\beta)}(t)$.

\subsection{Orthogonal polynomials on the unit ball and spherical harmonics}

For a multi-index $\nu\in\mathbb{N}_0^d$, $\nu = (\nu_1, \ldots, \nu_d)$ and
$x = (x_1, \ldots, x_d)$, a monomial in the variables
$x_1, \ldots, x_d$ is a product
\begin{gather*}
x^{\nu} = x_1^{\nu_1} \cdots x_d^{\nu_d}.
\end{gather*}
The number $|\nu| = \nu_1 + \cdots + \nu_d$ is called the total degree of $x^{\nu}$.
A polynomial $P$ in $d$ variables is a f\/inite linear combination of monomials.

Let $\Pi^d$ denote the space of polynomials in~$d$ real variables. For a given
nonnegative integer~$n$,
let $\Pi_n^d$ denote the linear space of polynomials in several variables of (total) degree
at most~$n$ and let~$\mathcal{P}_n^d$ denote the space of homogeneous polynomials of degree~$n$.

The unit ball and the unit sphere in $\mathbb{R}^d$ are
denoted, respectively, by
\begin{gather*}
\mathbb{B}^d :=\big\{x\in \mathbb{R}^d\colon \|x\| \leqslant 1\big\} \qquad \textrm{and} \qquad \mathbb{S}^{d-1}:=\big\{\xi\in \mathbb{R}^d\colon \|\xi\| = 1\big\}.
\end{gather*}
where $\|x\|$ denotes as usual the Euclidean norm of $x$.

For $\mu \in \mathbb{R}$, the weight function $W_\mu(x) = (1-\|x\|^2)^{\mu-1/2}$ is integrable on the unit ball if $\mu > -1/2$. Let us denote the normalization
constant of $W_\mu$ by ${\omega_\mu}$,
\begin{gather}\label{omegamu}
\omega_\mu := \int_{{\mathbb{B}^d}} W_\mu(x) dx = \frac{\pi^{d/2}\Gamma(\mu+1/2)}{\Gamma(\mu + (d+1)/2)},
\end{gather}
and consider the inner product
\begin{gather*}
 \langle f,g \rangle_\mu = \frac{1}{\omega_\mu} \int_{{\mathbb{B}^d}} f(x) g(x) W_\mu(x) dx,
\end{gather*}
which is normalized so that $\langle 1,1\rangle_\mu = 1$.

A polynomial $P \in \Pi_n^d$ is called \textit{orthogonal} with respect to $W_\mu$ on the ball if
$\langle P, Q\rangle_\mu =0$ for all $Q \in \Pi_{n-1}^d$, that is, if it is orthogonal to all polynomials
of lower {degree}. Let $\mathcal{V} _n^d(W_\mu)$ denote the space of orthogonal polynomials of total
degree $n$ with respect to $W_\mu$.

For $n\ge 0$, let $\{P^n_{\nu}(x)\colon |\nu|=n\}$ denote a basis of $\mathcal{V}_n^d(W_\mu)$.
Notice that every element of $\mathcal{V} _n^d(W_\mu)$ is orthogonal to polynomials of lower degree. If the
elements of the basis are also orthogonal to each other, that is,
$\langle P_\nu^n, P_\eta^n \rangle_\mu=0$ whenever $\nu \ne \eta$,
we call the basis \textit{mutually orthogonal}. If, in addition,
$\langle P_\nu^n, P_\nu^n \rangle_\mu =1$, we call the basis \textit{orthonormal}.

Harmonic polynomials of degree $n$ in $d$-variables are polynomials in $\mathcal{P} _n^d$ that satisfy
the Laplace equation $\Delta Y = 0$, where
$
\Delta = \frac{\partial^2}{\partial x_1^2} + \cdots + \frac{\partial^2}{\partial x_d^2}
$
is the usual Laplace operator.

Let $\mathcal{H}_n^d$ denote the space of harmonic polynomials
of degree $n$. It is well known that
\begin{gather*}
 a_n^d: = \dim \mathcal{H}_n^d = \binom{n+d-1}{d-1} - \binom{n+d-3}{d-1}.
\end{gather*}
Spherical harmonics are the restriction of harmonic polynomials to the unit sphere. If $Y \in
\mathcal{H}_n^d$, {then} in spherical-polar {coordinates}
$x = r \xi$, $ r > 0$ and $\xi \in \mathbb{S}^{d-1}$, we get
\begin{gather*}
Y(x) = r^n Y(\xi),
\end{gather*}
so that $Y$ is uniquely determined by its restriction to the sphere. We shall also use
$\mathcal{H}_n^d$ to denote the space of spherical harmonics of degree $n$.

Let $d \sigma$ denote the surface measure and $\sigma_{d-1}$ denote the surface area,
\begin{gather} \label{sigma_d}
 \sigma_{d-1} := \int_{\mathbb{S}^{d-1}} d\sigma = \frac{2 \pi^{d/2}}{\Gamma(d/2)}.
\end{gather}
Spherical harmonics of dif\/ferent degrees are orthogonal with respect to the inner product
\begin{gather*}
 \langle f, g \rangle_{\mathbb{S}^{d-1}}: = \frac{1}{\sigma_{d-1}} \int_{\mathbb{S}^{d-1}} f(\xi) g(\xi) d\sigma(\xi).
\end{gather*}

If $Y(x)$ is a harmonic polynomial of degree $n$, by Euler's equation for homogeneous polynomials,
we deduce
\begin{gather*}
\langle x, \nabla\rangle Y(x) = \sum_{i=1}^d x_i \frac{\partial}{\partial x_i} Y(x) = n Y(x).
\end{gather*}

In spherical-polar coordinates $x = r \xi$, $r > 0$ and $\xi \in \mathbb{S}^{d-1}$,
the dif\/ferential operators $\Delta$ and~$\langle x, \nabla\rangle$ can be decomposed
as follows (cf.~\cite{DX12}):
\begin{gather}
\Delta = \frac{\partial^2}{\partial r^2} + \frac{d-1}{r}\frac{\partial}{\partial r} + \frac{1}{r^2}\Delta_0, \label{L-B-operator}\\
\langle x, \nabla\rangle = r \frac{\partial}{\partial r}. \label{x-nabla}
\end{gather}
The operator $\Delta_0$, which is the spherical part of the Laplacian,
is called the Laplace--Beltrami operator and it has spherical harmonics
as its eigenfunctions. More precisely, it holds (cf.~\cite{DX12})
\begin{gather}\label{eigen}
\Delta_0 Y (\xi) = -n(n+d-2)Y(\xi), \qquad \forall \, Y \in \mathcal{H}_n^d, \qquad \xi \in \mathbb{S}^{d-1}.
\end{gather}

In spherical-polar coordinates $x = r \xi$, $ r > 0$ and $\xi \in \mathbb{S}^{d-1} $, a mutually orthogonal
basis of $\mathcal{V}_n^d(W_\mu)$ can be given in terms of the Jacobi polynomials and spherical harmonics
(see, for instance,~\cite{DX14}).

\begin{Proposition} \label{prop-2.1}
For $n \in \mathbb{N}_0$ and $0 \leqslant k \leqslant n/2$, let $\{Y_\nu^{n-2k}(x)\colon 1\leqslant \nu\leqslant a_{n-2k}^d\}$ denote
an orthonormal basis for $\mathcal{H}_{n-2k}^d$. Define
\begin{gather}\label{baseP}
P_{k,\nu}^{n}(x) = P_{k}^{(\mu-\frac{1}{2}, \beta_k)}\big(2 \|x\|^2 -1\big) Y_\nu^{n-2k}(x),
\end{gather}
where $\beta_k = n-2k + \frac{d-2}{2}$.
Then the set $\{P_{k,\nu}^{n}(x)\colon 0 \leqslant k \leqslant n/2, \, 1 \leqslant \nu \leqslant a_{n-2k}^d \}$
is a mutually orthogonal basis of~$\mathcal{V} _n^d(W_\mu)$.
\end{Proposition}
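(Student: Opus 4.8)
The plan is to reduce the $d$-dimensional inner product $\langle\cdot,\cdot\rangle_\mu$ to the product of a one-dimensional radial integral and an angular integral over the sphere, exploiting the product structure of $P_{k,\nu}^n$. First I would record that each $P_{k,\nu}^n$ is a genuine polynomial of degree exactly $n$: the factor $P_k^{(\mu-\frac{1}{2},\beta_k)}(2\|x\|^2-1)$ is a polynomial in $\|x\|^2$ of degree $k$, hence of degree $2k$ in $x$, while $Y_\nu^{n-2k}$ is homogeneous of degree $n-2k$, so their product lies in $\Pi_n^d$ with nonvanishing leading part. Thus $P_{k,\nu}^n\in\Pi_n^d$.

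The heart of the argument is to compute $\langle P_{k,\nu}^n, P_{j,\eta}^m\rangle_\mu$ for arbitrary indices. Writing $x=r\xi$ and using the polar decomposition of the integral together with the homogeneity $Y_\nu^{n-2k}(r\xi)=r^{n-2k}Y_\nu^{n-2k}(\xi)$ and $W_\mu(r\xi)=(1-r^2)^{\mu-1/2}$, the integrand separates and the integral factors as a radial integral times $\int_{\mathbb{S}^{d-1}}Y_\nu^{n-2k}(\xi)Y_\eta^{m-2j}(\xi)\,d\sigma(\xi)$. By the orthogonality of spherical harmonics of different degrees, this angular factor vanishes unless $n-2k=m-2j$, and when these agree it equals $\sigma_{d-1}\delta_{\nu\eta}$ by orthonormality of the chosen basis $\{Y_\nu^{n-2k}\}$.

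When $n-2k=m-2j=:\ell$, the two Jacobi polynomials carry the common parameters $(\mu-\frac{1}{2},\beta)$ with $\beta=\ell+\frac{d-2}{2}$, and the radial factor becomes
\begin{gather*}
\int_0^1 r^{2\ell+d-1}\, P_k^{(\mu-\frac{1}{2},\beta)}\big(2r^2-1\big)\, P_j^{(\mu-\frac{1}{2},\beta)}\big(2r^2-1\big)\,\big(1-r^2\big)^{\mu-\frac{1}{2}}\,dr.
\end{gather*}
Here the substitution $t=2r^2-1$ is the key step: it sends $r^{2\ell+d-1}\,dr$ to a constant multiple of $(1+t)^{\beta}\,dt$ and $(1-r^2)^{\mu-1/2}$ to a constant multiple of $(1-t)^{\mu-1/2}$, precisely because $\beta_k$ was chosen as $n-2k+\frac{d-2}{2}$, so that the Euclidean volume element is converted into exactly the Jacobi weight $(1-t)^{\mu-1/2}(1+t)^{\beta}$. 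The radial integral is therefore a nonzero constant times the Jacobi inner product $(P_k^{(\mu-\frac{1}{2},\beta)},P_j^{(\mu-\frac{1}{2},\beta)})_{\mu-\frac{1}{2},\beta}$, which by orthogonality of the Jacobi polynomials vanishes unless $k=j$. Combining, $\langle P_{k,\nu}^n, P_{j,\eta}^m\rangle_\mu=0$ unless $\ell$, $\nu=\eta$ and $k=j$ all match, which forces $n=m$; on the diagonal the norm is strictly positive.

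Finally I would deduce the basis statement. The orthogonality just established shows in particular that $P_{k,\nu}^n$ is orthogonal to every $P_{j,\eta}^m$ with $m<n$. The telescoping identity $\sum_{0\le k\le m/2} a_{m-2k}^d=\binom{m+d-1}{d-1}=\dim\mathcal{P}_m^d$ shows that for each $m$ these lower elements are mutually orthogonal and equal in number to $\dim\mathcal{P}_m^d$; summing over $m\le n-1$ they span $\Pi_{n-1}^d$, so $P_{k,\nu}^n\perp\Pi_{n-1}^d$ and hence $P_{k,\nu}^n\in\mathcal{V}_n^d(W_\mu)$. The same count at level $n$ produces $\binom{n+d-1}{d-1}=\dim\mathcal{V}_n^d(W_\mu)$ mutually orthogonal, hence linearly independent, elements of $\mathcal{V}_n^d(W_\mu)$, so they form a mutually orthogonal basis. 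The step I expect to be most delicate is the radial substitution and the bookkeeping of exponents that makes the choice $\beta_k=n-2k+\frac{d-2}{2}$ turn the volume element into precisely the right Jacobi weight; everything else is a routine assembly of the orthogonality of spherical harmonics and of Jacobi polynomials.
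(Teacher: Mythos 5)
Your proof is correct and follows essentially the same route as the paper: Proposition~\ref{prop-2.1} is quoted there from the literature without proof, but the paper's own proof of Theorem~\ref{thm-3.1} is precisely your computation --- polar decomposition of the ball integral via \eqref{changevar}, orthonormality of the spherical harmonics forcing $n-2k=m-2j$ and $\nu=\eta$, and the substitution $t=2r^2-1$ converting the radial factor into the Jacobi inner product with parameters $\bigl(\mu-\tfrac{1}{2},\beta_k\bigr)$, which kills the off-diagonal terms $k\neq j$. Your concluding dimension count, using the telescoping identity $\sum_{0\le k\le m/2} a_{m-2k}^d=\binom{m+d-1}{d-1}$ to show the lower-degree elements span $\Pi_{n-1}^d$ and that the level-$n$ family has the full dimension of $\mathcal{V}_n^d(W_\mu)$, is carried out correctly and merely makes explicit a step the paper leaves implicit.
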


It is known that orthogonal polynomials with respect to~$W_\mu$ are eigenfunctions of a second-order
dif\/ferential operator~$\mathcal{D}_\mu$. More precisely, we have
\begin{align} \label{eq:Bdiff}
 \mathcal{D}_\mu P = -(n+d) (n + 2 \mu - 1)P, \qquad \forall \, P \in \mathcal{V} _n^d(W_\mu),
\end{align}
where
\begin{gather*}
 \mathcal{D}_\mu := \Delta - \sum_{j=1}^d \frac{\partial}{\partial x_j} x_j \left[
 2 \mu - 1 + \sum_{i=1}^d x_i \frac{\partial }
 {\partial x_i} \right].
\end{gather*}

\section{An inner product on the ball with an extra spherical term}\label{section3}

Let us def\/ine the inner product
\begin{gather*} 
 \langle f ,g \rangle_{\mu}^{\lambda} = \frac{1}{\omega_\mu} \int_{\mathbb{B}^d} f(x) g(x) W_\mu(x) dx +
 \frac{\lambda}{\sigma_{d-1}} \int_{\mathbb{S}^{d-1}} f(\xi) g(\xi) d\sigma,
\end{gather*}
where $\lambda>0$, $d\sigma$ denotes the surface measure on the unit sphere $\mathbb{S}^{d-1}$,
$\sigma_{d-1}$ denotes the area of~$\mathbb{S}^{d-1}$ and~$\omega_\mu$ is the normalizing
constant~\eqref{omegamu}.

As a consequence of the central symmetry of the inner product, we can use a construction
analogous to~\eqref{baseP} to obtain a basis of mutually
orthogonal polynomials with respect to $ \langle f ,g \rangle_{\mu}^{\lambda}$. This time,
the radial parts constitute a family of polynomials in one variable related to Jacobi polynomials.

\begin{Theorem} \label{thm-3.1}
For $n \in \mathbb{N}_0$ and $0 \le k \le n/2$, let $\{Y_\nu^{n-2k}\colon 1\le \nu\le a_{n-2k}^d\}$ denote
an orthonormal basis for $\mathcal{H}_{n-2k}^d$.
Let $\beta_k = n -2k + (d-2)/2$
and let $q_{k}^{(\mu -1/2, \beta_k,\lambda)}(t)$ be the $k$-th orthogonal polynomial with respect to
\begin{gather} \label{eq:op1d}
(f,g)_{\mu-1/2,\beta_k}^{\lambda} = \frac{\sigma_{d-1}}{\omega_{\mu}}\frac{1}{2^{\mu+\beta_k+3/2}}
\int_{-1}^1 f(t)g(t)(1-t)^{\mu-1/2} (1+t)^{\beta_k} dt + \lambda f(1) g(1).
\end{gather}
Then the polynomials
\begin{gather*}
Q_{k,\nu}^{n}(x) = q_{k}^{(\mu-1/2, \beta_k,\lambda)}\big(2 \|x\|^2 -1\big) Y_\nu^{n-2k}(x),
\end{gather*}
with $1 \le k \le n/2$, $1 \le \nu \le a_{n-2k}^d$ constitute a mutually
orthogonal basis of $\mathcal{V}_n^d(W_\mu,\lambda)$ the linear space of orthogonal polynomials of
degree exactly $n$ with respect to $\langle \cdot, \cdot \rangle_{\mu}^{\lambda}$.
\end{Theorem}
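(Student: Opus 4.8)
The plan is to reduce the orthogonality of the $Q_{k,\nu}^n$ to the orthogonality properties already established for spherical harmonics and the one-dimensional polynomials $q_k$. The key observation is that the inner product $\langle\cdot,\cdot\rangle_\mu^\lambda$ is centrally symmetric (invariant under $x\mapsto -x$ in a suitable sense, and more importantly rotationally structured), so I expect both the ball integral and the sphere integral to factor into a radial part and an angular part after passing to spherical-polar coordinates $x=r\xi$.

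\medskip

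First I would compute $\langle Q_{k,\nu}^n, Q_{l,\eta}^m\rangle_\mu^\lambda$ directly by substituting the polar decomposition. For the ball term, using $dx = r^{d-1}\,dr\,d\sigma(\xi)$ and $W_\mu(x) = (1-r^2)^{\mu-1/2}$, together with the homogeneity $Y_\nu^{n-2k}(x) = r^{n-2k}Y_\nu^{n-2k}(\xi)$, the integrand separates as a product of a function of $r$ times $Y_\nu^{n-2k}(\xi)Y_\eta^{m-2l}(\xi)$. The angular integral is then governed by the orthogonality of spherical harmonics: it vanishes unless $n-2k = m-2l$ \emph{and} $\nu=\eta$ (since the $Y_\nu$ form an orthonormal basis of each $\mathcal{H}_{n-2k}^d$). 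The sphere term is even simpler, since on $\mathbb{S}^{d-1}$ we have $r=1$, so $Q_{k,\nu}^n(\xi) = q_k(1)\,Y_\nu^{n-2k}(\xi)$, and again the angular orthogonality forces $n-2k=m-2l$ and $\nu=\eta$.

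\medskip

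Second, once both contributions have collapsed onto the diagonal angular case $n-2k=m-2l=:j$ and $\nu=\eta$, I would make the substitution $t = 2r^2-1$ in the surviving radial integral. A short calculation (the Jacobian is $r\,dr = \tfrac14\,dt$, and $r^{d-1}$ combines with the homogeneity factors $r^{n-2k}r^{m-2l}=r^{2j}$ to produce the correct power of $(1+t)$) should turn the radial part of the ball term into exactly the Jacobi integral appearing in \eqref{eq:op1d}, with parameters $\mu-1/2$ and $\beta_k = j+(d-2)/2$, and the prefactor $\tfrac{\sigma_{d-1}}{\omega_\mu}2^{-(\mu+\beta_k+3/2)}$ should emerge from tracking the constants. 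The sphere term contributes precisely $\lambda\, q_k(1)q_l(1)$. Thus the whole inner product equals $(q_k,q_l)_{\mu-1/2,\beta_k}^\lambda$ whenever the angular indices agree, and by the defining orthogonality of the $q_k$ this vanishes for $k\neq l$ and is nonzero for $k=l$. Finally I would verify that $\{Q_{k,\nu}^n\}$ spans $\mathcal{V}_n^d(W_\mu,\lambda)$ by a dimension count: the number of pairs $(k,\nu)$ with $0\le k\le n/2$, $1\le\nu\le a_{n-2k}^d$ matches $\dim\mathcal{P}_n^d$, exactly as in Proposition~\ref{prop-2.1}, and the leading terms are linearly independent because the $q_k$ have degree exactly $k$.

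\medskip

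The main obstacle I anticipate is bookkeeping rather than conceptual: correctly matching the constant $\tfrac{\sigma_{d-1}}{\omega_\mu}2^{-(\mu+\beta_k+3/2)}$ in \eqref{eq:op1d} against the change of variables, and checking that the parameter $\beta_k$ is the same for both $Q_{k,\nu}^n$ and $Q_{l,\eta}^m$ on the diagonal $n-2k=m-2l$ so that a \emph{single} one-dimensional orthogonality relation applies. One must be slightly careful that when $n\neq m$ but $n-2k=m-2l$, the polynomials still live in different $\mathcal{V}^d$ spaces yet share the same radial weight; the orthogonality across degrees is what the spherical-harmonic angular integral secures, and the $q_k$-orthogonality then handles the remaining radial index. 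I expect no genuine difficulty beyond verifying these constants and index matchings carefully.
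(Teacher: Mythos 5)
Your proposal is correct and follows essentially the same route as the paper: polar coordinates via $dx = r^{d-1}\,dr\,d\sigma(\xi)$, angular reduction by orthonormality of the spherical harmonics (yielding the factors $\delta_{n-2k,m-2l}\,\delta_{\nu\eta}$ in both the ball and sphere terms), the substitution $t=2r^2-1$ to identify the surviving radial expression with the one-dimensional inner product \eqref{eq:op1d}, and the orthogonality of the univariate polynomials $q_k^{(\mu-1/2,\beta_k,\lambda)}$ to conclude. Your added dimension count for spanning $\mathcal{V}_n^d(W_\mu,\lambda)$ is a completeness detail the paper leaves implicit, but it does not change the argument.
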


\begin{proof}
The proof of this theorem uses the following well known identity
\begin{gather}\label{changevar}
 \int_{\mathbb{B}^d} f(x) dx = \int_0^1 r^{d-1}\int_{\mathbb{S}^{d-1}}f(r \xi) d\sigma (\xi) dr
\end{gather}
that arises from the spherical-polar coordinates $x=r \xi$, $\xi\in \mathbb{S}^{d-1}$.

In order to check the orthogonality, we need to compute the product
\begin{gather} \label{pr1}
\langle Q_{j,\nu}^n, Q_{k,\eta}^m \rangle_\mu^{\lambda} = \frac{1}{\omega_\mu} \int_{\mathbb{B}^d} Q_{j,\nu}^n(x) Q_{k,\eta}^m(x) W_\mu(x) dx + \frac{\lambda}{\sigma_{d-1}} \int_{\mathbb{S}^{d-1}} Q_{j,\nu}^n(\xi) Q_{k,\eta}^m(\xi) d\sigma(\xi).
\end{gather}

Let us start with the computation of the f\/irst integral
\begin{gather*}
I_1=\frac{1}{\omega_\mu} \int_{\mathbb{B}^d} Q_{j,\nu}^n(x) Q_{k,\eta}^m(x) W_\mu(x) dx.
\end{gather*}
Using polar coordinates, relation \eqref{changevar}, and the orthogonality of the
spherical harmonics we obtain
\begin{gather*}
I_1 = \frac{\sigma_{d-1}}{\omega_\mu} \int_0^1 q_j^{(\mu-1/2,\beta_j,\lambda)}\big(2r^2-1\big) q_k^{(\mu-1/2,\beta_k,\lambda)}\big(2r^2-1\big)\big(1-r^2\big)^{\mu-1/2} r^{n-2j+m-2k+d-1} dr
\\
\hphantom{I_1 =}{} \times \delta_{n-2j,m-2k} \delta_{\nu\eta}
\\
\hphantom{I_1}{} = \frac{\sigma_{d-1}}{\omega_\mu} \int_0^1 q_j^{(\mu-1/2,\beta_j,\lambda)}\big(2r^2-1\big) q_k^{(\mu-1/2,\beta_j,\lambda)}\big(2r^2-1\big) \big(1-r^2\big)^{\mu-1/2} r^{2(n-2j)+d-1} dr
\\
\hphantom{I_1}{} \times \delta_{n-2j,m-2k} \delta_{\nu\eta}.
\end{gather*}
Finally, the change of variables $t=2r^2-1$ moves the integral to the interval $[-1,1]$,
\begin{gather} \label{i1}
I_1 = \frac{1}{2^{\beta_j+\mu+3/2}} \frac{\sigma_{d-1}}{\omega_\mu} \int_{-1}^1 q_j^{(\mu,\beta_j,\lambda)}(t) q_k^{(\mu,\beta_j,\lambda)}(t) (1-t)^{\mu-1/2} (1+t)^{\beta_j} dt \times \delta_{n-2j,m-2k} \delta_{\nu\eta}.\!\!\!
\end{gather}
Let us now compute the second integral in \eqref{pr1},
\begin{gather}
I_2 = \frac{\lambda}{\sigma_{d-1}} \int_{\mathbb{S}^{d-1}} Q^n_{j,\nu}(\xi) Q^m_{k,\eta}(\xi) d\sigma(\xi)
\nonumber\\
\hphantom{I_2}{} = \frac{\lambda}{\sigma_{d-1}}
q_j^{(\mu-1/2,\beta_j,\lambda)}(1) q_k^{(\mu-1/2,\beta_k,\lambda)}(1)
 \int_{\mathbb{S}^{d-1}} Y^{n-2j}_{\nu}(\xi) Y^{n-2k}_{\eta}(\xi) d\sigma(\xi) \notag \\
\hphantom{I_2}{}= \lambda q_j^{(\mu-1/2,\beta_j,\lambda)}(1) q_k^{(\mu-1/2,\beta_k,\lambda)}(1)
 \delta_{n-2j,m-2k} \delta_{\nu,\eta}.\label{i2}
\end{gather}

To end the proof, we just have to put together \eqref{i1} and \eqref{i2} to get the value of \eqref{pr1}
in terms of the inner product~\eqref{eq:op1d} as
\begin{gather*}
\langle Q_{j,\nu}^n, Q_{k,\eta}^m \rangle_\mu^\lambda = \big(q_j^{(\mu-1/2,\beta_j,\lambda)}, q_k^{(\mu-1/2,\beta_k,\lambda)}\big)_{\mu-1/2,\beta_j}^\lambda \delta_{n-2j,m-2k} \delta_{\nu,\eta}.
\end{gather*}
And the result follows from the orthogonality of the univariate polynomial
$q_k^{(\mu-1/2,\beta_k,\lambda)}$.
\end{proof}

\begin{Remark}
Of course, Theorem~\ref{thm-3.1} can be formulated if we replace the weight function
$W_{\mu}(x)$ by any rotation invariant orthogonality
measure on the unit ball.
\end{Remark}

Following Koornwinder \cite[Theorem~3.1]{K84},
orthogonal polynomials with respect to~\eqref{eq:op1d} can be written in
terms of Jacobi polynomials as we show in the following theorem.

\begin{Theorem} \label{thm-3.2}
Let $\{q_{k}^{(\alpha, \beta,\lambda)}(t)\}_{k\geqslant0}$ be the sequence of polynomials defined by
\begin{gather} \label{poly_q}
q_{k}^{(\alpha, \beta,\lambda)}(t) = \left[a_k - (1+t)\frac{d}{dt}\right] P_{k}^{(\alpha, \beta)}(t),
\end{gather}
where
\begin{gather*} 
a_k = \frac{1}{\lambda} \frac{\Gamma(\alpha+\frac{d}{2}+1)}{\Gamma(\frac{d}{2})}\frac{k!}{(\alpha+1)_k} \frac{\Gamma(\beta+k+1)}{\Gamma(\alpha+\beta+k+1)} +
\frac{k (k+\alpha+\beta+1)}{\alpha + 1}.
\end{gather*}
Then they are orthogonal with respect to the inner product
\begin{gather} \label{eq:op1d_alpha}
(f,g)_{\alpha,\beta}^{\lambda} = \frac{\Gamma(\alpha+\frac{d}{2}+1)}{\Gamma(\frac{d}{2}) \Gamma(\alpha+1)}\frac{1}{2^{\alpha+\beta+1}}
\int_{-1}^1 f(t)g(t)(1-t)^{\alpha} (1+t)^{\beta} dt + \lambda f(1) g(1),
\end{gather}
and satisfy the normalization
\begin{gather*}
q_{k}^{(\alpha, \beta,\lambda)}(1) =
\frac{1}{\lambda} \frac{\Gamma(\alpha+\frac{d}{2}+1)}{\Gamma(\frac{d}{2})} \frac{\Gamma(\beta+k+1)}{\Gamma(\alpha+\beta+k+1)}.
\end{gather*}
\end{Theorem}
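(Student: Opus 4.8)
The plan is to prove the two assertions separately: the normalization $q_k^{(\alpha,\beta,\lambda)}(1)$ is a direct evaluation, while the orthogonality is obtained by identifying \eqref{poly_q} with the orthogonal polynomial produced by an Uvarov (reproducing–kernel) correction of the Jacobi family. First I would note that the inner product \eqref{eq:op1d_alpha} is positive definite (its continuous part is a positive Jacobi measure and $\lambda>0$), so for each $k$ there is, up to a scalar, a unique orthogonal polynomial of degree $k$; since $(1+t)\frac{d}{dt}$ preserves degree, $q_k^{(\alpha,\beta,\lambda)}$ has degree $k$, and establishing $\big(q_k^{(\alpha,\beta,\lambda)},p\big)_{\alpha,\beta}^{\lambda}=0$ for every $p\in\Pi_{k-1}$ yields the mutual orthogonality of the whole family. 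For the value at $t=1$, evaluating \eqref{poly_q} and using \eqref{derJ} together with $P_k^{(\alpha,\beta)}(1)=(\alpha+1)_k/k!$ gives
\[
q_k^{(\alpha,\beta,\lambda)}(1)=\Big[a_k-\tfrac{k(k+\alpha+\beta+1)}{\alpha+1}\Big]\frac{(\alpha+1)_k}{k!},
\]
and substituting the definition of $a_k$ collapses the bracket to the single Gamma quotient, producing the stated normalization. This part is routine.

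For the orthogonality I would construct the orthogonal polynomial explicitly. Writing $(\cdot,\cdot)_0$ for the continuous part of \eqref{eq:op1d_alpha}, set $h_j=\big(P_j^{(\alpha,\beta)},P_j^{(\alpha,\beta)}\big)_0$ and let $K_{k-1}(s,t)=\sum_{j=0}^{k-1}h_j^{-1}P_j^{(\alpha,\beta)}(s)P_j^{(\alpha,\beta)}(t)$ be the reproducing kernel of $(\cdot,\cdot)_0$ on $\Pi_{k-1}$. The Uvarov candidate
\[
\tilde q_k(t)=P_k^{(\alpha,\beta)}(t)-\frac{\lambda P_k^{(\alpha,\beta)}(1)}{1+\lambda K_{k-1}(1,1)}\,K_{k-1}(t,1)
\]
satisfies $\big(\tilde q_k,p\big)_{\alpha,\beta}^{\lambda}=0$ for all $p\in\Pi_{k-1}$: the reproducing property turns the continuous part into $-\frac{\lambda P_k^{(\alpha,\beta)}(1)}{1+\lambda K_{k-1}(1,1)}\,p(1)$, while $\tilde q_k(1)=P_k^{(\alpha,\beta)}(1)/(1+\lambda K_{k-1}(1,1))$ makes the point-mass term cancel it exactly. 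This step is purely algebraic and needs no boundary analysis, so it is valid for all $\alpha>-1$.

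It remains to match $\tilde q_k$ with \eqref{poly_q}. The key simplification is the endpoint value of the kernel: since $P_{k-1}^{(\alpha+1,\beta)}$ is orthogonal with respect to $(1-t)^{\alpha+1}(1+t)^{\beta}$, writing any $p\in\Pi_{k-1}$ as $p(t)=p(1)-(1-t)\tilde p(t)$ shows that $\big(P_{k-1}^{(\alpha+1,\beta)},p\big)_0$ is a fixed multiple of $p(1)$; by uniqueness of the reproducing kernel this forces $K_{k-1}(t,1)=C\,P_{k-1}^{(\alpha+1,\beta)}(t)$ for an explicit constant $C$. Hence $\tilde q_k$ lies in the two-dimensional span of $P_k^{(\alpha,\beta)}$ and $P_{k-1}^{(\alpha+1,\beta)}$. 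On the other hand \eqref{derJ} rewrites \eqref{poly_q} as $q_k^{(\alpha,\beta,\lambda)}=a_kP_k^{(\alpha,\beta)}-\tfrac{k+\alpha+\beta+1}{2}(1+t)P_{k-1}^{(\alpha+1,\beta+1)}$, and standard contiguous relations for Jacobi polynomials (see \cite{Sz}) express both $P_k^{(\alpha,\beta)}$ and $(1+t)P_{k-1}^{(\alpha+1,\beta+1)}$ in the basis $\big\{P_k^{(\alpha+1,\beta)},P_{k-1}^{(\alpha+1,\beta)}\big\}$. Thus $q_k^{(\alpha,\beta,\lambda)}$ and $\tilde q_k$ live in the same two-dimensional space, and proportionality reduces to matching a single ratio of coefficients; equality of ratios gives $q_k^{(\alpha,\beta,\lambda)}\propto\tilde q_k$, which proves the orthogonality, the proportionality being pinned down by the value at $t=1$ found above.

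The main obstacle is the bookkeeping in this final matching: one must produce the correct contiguous relation for $(1+t)P_{k-1}^{(\alpha+1,\beta+1)}$ and track every normalizing constant — in particular the dimension-dependent factor $\Gamma\!\big(\alpha+\tfrac d2+1\big)/\Gamma\!\big(\tfrac d2\big)$, which enters only through the constant multiplying the continuous part of \eqref{eq:op1d_alpha} and hence through $h_j$, $K_{k-1}(1,1)$ and $C$ — so that the coefficient ratio reproduces precisely the displayed value of $a_k$. Everything else is either the unique-orthogonal-polynomial reduction or the elementary endpoint evaluation, both of which are robust in the parameter $\alpha$.
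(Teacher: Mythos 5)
Your proof is correct, but it takes a genuinely different route from the paper's. The paper argues directly: it tests $q_{k}^{(\alpha,\beta,\lambda)}$ against the basis $\{(1-t)^j\}_{0\leqslant j\leqslant k-1}$ of the polynomials of degree at most $k-1$; for $j\geqslant 1$ the point mass vanishes and, after \eqref{derJ}, the factor $(1+t)(1-t)^{j}$ is absorbed into the $(\alpha+1,\beta+1)$ Jacobi weight so the integral dies by orthogonality of $P_{k-1}^{(\alpha+1,\beta+1)}$, and the whole content of the proof is the single case $j=0$, where an integration by parts reduces the integral to a beta integral whose cancellation against $\lambda q_k^{(\alpha,\beta,\lambda)}(1)$ is exactly what the displayed $a_k$ encodes. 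You instead reconstruct the orthogonal polynomial ab initio by the Uvarov correction together with the classical endpoint-kernel identity $K_{k-1}(t,1)=C\,P_{k-1}^{(\alpha+1,\beta)}(t)$, and then match the two representations inside the span of $\big\{P_k^{(\alpha,\beta)},P_{k-1}^{(\alpha+1,\beta)}\big\}$. This buys more than the paper's computation: it explains where \eqref{poly_q} and the value of $a_k$ come from (essentially Koornwinder's original derivation, which the paper only cites), and it shows $q_k^{(\alpha,\beta,\lambda)}$ is, up to a scalar, \emph{the} $k$-th orthogonal polynomial, not merely orthogonal to lower degrees. The deferred bookkeeping does close: the contiguous relation $(1+t)P_{k-1}^{(\alpha+1,\beta+1)}=\frac{2k}{k+\alpha+\beta+1}P_k^{(\alpha,\beta)}+\frac{2(k+\beta)}{k+\alpha+\beta+1}P_{k-1}^{(\alpha+1,\beta)}$ gives $q_k^{(\alpha,\beta,\lambda)}=(a_k-k)P_k^{(\alpha,\beta)}-(k+\beta)P_{k-1}^{(\alpha+1,\beta)}$, and with $c_0=\big(P_{k-1}^{(\alpha+1,\beta)},1\big)_0=\frac{\Gamma(\alpha+\frac{d}{2}+1)}{\Gamma(\frac{d}{2})}\frac{\Gamma(k+\beta)}{\Gamma(k+\alpha+\beta+1)}$ and $C=c_0^{-1}$, the ratio condition $\frac{k+\beta}{a_k-k}=\frac{\lambda C P_k^{(\alpha,\beta)}(1)}{1+\lambda C P_{k-1}^{(\alpha+1,\beta)}(1)}$ yields $a_k=k+\frac{k(k+\beta)}{\alpha+1}+\frac{1}{\lambda}\frac{\Gamma(\alpha+\frac{d}{2}+1)}{\Gamma(\frac{d}{2})}\frac{k!}{(\alpha+1)_k}\frac{\Gamma(k+\beta+1)}{\Gamma(k+\alpha+\beta+1)}$, which is precisely the stated $a_k$ because $k+\frac{k(k+\beta)}{\alpha+1}=\frac{k(k+\alpha+\beta+1)}{\alpha+1}$; your endpoint evaluation of $q_k^{(\alpha,\beta,\lambda)}(1)$ likewise agrees with the paper. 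Two small points you gloss over are harmless but worth stating: $c_0\neq 0$ (otherwise $P_{k-1}^{(\alpha+1,\beta)}$ would be $(\cdot,\cdot)_0$-orthogonal to itself, contradicting positive definiteness), and $a_k\neq k$, so that $q_k^{(\alpha,\beta,\lambda)}$ has exact degree $k$ — clear since $a_k-k>0$ for $\alpha,\beta>-1$ and $\lambda>0$.
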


\begin{proof}
The polynomials def\/ined in \eqref{poly_q} are of exact degree $k$ and their orthogonality
can be deduced using the basis $\{(1-t)^j\}_{0 \leqslant j \leqslant k-1}$.
From \eqref{derJ} and the orthogonality of the Jacobi
polynomials we easily deduce
\begin{gather*}
\big(q_{k}^{(\alpha, \beta,\lambda)}(t),(1-t)^j\big)_{\alpha,\beta}^{\lambda} = 0, \qquad
j = 1, \ldots, k-1.
\end{gather*}
Now consider the case $j=0$. Then
\begin{gather*}
\big(q_{k}^{(\alpha, \beta,\lambda)},1\big)_{\alpha,\beta}^{\lambda} =
\frac{\Gamma(\alpha+\frac{d}{2}+1)}{\Gamma(\frac{d}{2}) \Gamma(\alpha+1)}\frac{1}{2^{\alpha+\beta+1}}
\int_{-1}^1 q_{k}^{(\alpha, \beta,\lambda)}(t) (1-t)^{\alpha} (1+t)^{\beta} dt + \lambda q_{k}^{(\alpha, \beta,\lambda)}(1) \\
\hphantom{\big(q_{k}^{(\alpha, \beta,\lambda)},1\big)_{\alpha,\beta}^{\lambda}}{}
= - \frac{\Gamma(\alpha+\frac{d}{2}+1)}{\Gamma(\frac{d}{2}) \Gamma(\alpha+1)}\frac{1}{2^{\alpha+\beta+1}}
\int_{-1}^1 \frac{d}{dt} P_{k}^{(\alpha, \beta)}(t) (1-t)^{\alpha} (1+t)^{\beta+1} dt \\
\hphantom{\big(q_{k}^{(\alpha, \beta,\lambda)},1\big)_{\alpha,\beta}^{\lambda}=}{}
 + \lambda \big[ a_k P_{k}^{(\alpha, \beta)}(1) - (k+\alpha+\beta+1) P_{k-1}^{(\alpha+1, \beta+1)}(1)\big].
\end{gather*}
Next, an iterated integration by parts reduces the integral in the last expression to a~beta
integral which f\/inally gives
\begin{gather*}
\int_{-1}^1 \frac{d}{dt} P_{k}^{(\alpha, \beta)}(t) (1-t)^{\alpha} (1+t)^{\beta+1} dt =
2^{\alpha+\beta+1} \frac{\Gamma(\alpha+1)\Gamma(n+\beta+1)}{\Gamma(n+\alpha+\beta+1)},
\end{gather*}
and the result follows from \eqref{eq:op1d_alpha}.
\end{proof}

The partial dif\/ferential equation \eqref{eq:Bdiff} satisf\/ied by classical
orthogonal polynomials on the ball $\{P^n_{k,\nu}(x)\}$ can be deduced from
the second-order dif\/ferential equation corresponding to the Jacobi polynomials
in the radial parts and the fact that spherical harmonics are the eigenfunctions
of the Laplace--Beltrami operator.

In the case $\mu = 1/2$, we are going to identify the polynomials in the radial
parts of $\{Q^n_{k,\nu}(x)\}$ as a family of polynomials satisfying a~fourth-order dif\/ferential
equation. Next, using polar coordinates and spherical harmonics we will obtain the explicit
expression of a fourth-order partial dif\/ferential
operator having the multivariate orthogonal polynomials as eigenfunctions.

\subsection[The case $\mu = 1/2$]{The case $\boldsymbol{\mu = 1/2}$}

In this case, the inner product reduces to
\begin{gather*}
\langle f, g \rangle_{1/2}^{\lambda}:= \frac{1}{\omega_{1/2}} \int_{\mathbb{B}^d}
 f(x) g(x) dx + \frac{\lambda}{\sigma_{d-1}} \int_{\mathbb{S}^{d-1}} f(\xi)g(\xi) d\sigma(\xi).
\end{gather*}
The mutually orthogonal basis is given by
\begin{gather*}
Q_{k,\nu}^{n}(x) = q_{k}^{(0, \beta_k,\lambda)}\big(2 \|x\|^2 -1\big) Y_\nu^{n-2k}(x),
\end{gather*}
with $1 \le k \le n/2$ and $\{Y_\nu^{n-2k}(x)\colon 1 \le \nu \le a_{n-2k}^d\}$
an orthonormal basis of spherical harmonics.

The polynomials in the radial part $q_{k}^{(0, \beta_k,\lambda)}$
are orthogonal with respect to the inner product
\begin{gather*}
(f,g)_{0,\beta_k}^{\lambda} = \frac{\sigma_{d-1}}{\omega_{1/2}}\frac{1}{2^{\beta_k+2}}
\int_{-1}^1 f(t)g(t) (1+t)^{\beta_k} dt + \lambda f(1) g(1).
\end{gather*}
Finally, using \eqref{omegamu} and \eqref{sigma_d} we get $\sigma_{d-1}/\omega_{1/2} = d$,
and writing $M = \frac{d}{2 \lambda}$ we conclude that they are orthogonal with respect to
\begin{gather*}
(f,g)_{\beta_k}^{M} = \frac{1}{2^{\beta_k+1}}
\int_{-1}^1 f(t)g(t) (1+t)^{\beta_k} dt + \frac{1}{M} f(1) g(1).
\end{gather*}
Therefore, we can recognize $q_{k}^{(0, \beta_k,\lambda)}$ as the polynomials in one of the
families studied by H.L.~Krall in 1940: the Jacobi-type polynomials.

\section{Krall's Jacobi-type orthogonal polynomials}\label{section4}

Let $\alpha = 0$, $\beta > -1$ and $M>0$. We introduce the
\textit{Jacobi-type polynomials} as follows
\begin{gather} \label{jacobi-type}
q_k^{\beta,M}(t) := \left[ M - (1+t) \dfrac{d}{dt} + k(k+\beta+1)\right] P_k^{(0,\beta)}(t),
\qquad k = 0, 1, \ldots.
\end{gather}
From Theorem~\ref{thm-3.2} we can {easily deduce} their orthogonality properties.
In fact, the polynomials $\{q_k^{\beta,M}(t)\}_{k \geqslant 0}$
are orthogonal with respect to the inner product
\begin{gather*}
(f,g)_{\beta}^{M} = \frac{1}{2^{\beta+1}}
\int_{-1}^1 f(t)g(t) (1+t)^{\beta} dt + \frac{1}{M} f(1) g(1),
\end{gather*}
and satisfy the normalization $q_k^{\beta,M}(1) = M$.

Using the dif\/ferential equation for the Jacobi polynomials $\{P_k^{(0,\beta)}(t)\}_{k \geqslant 0}$
\eqref{diffeqJ}, relation \eqref{jacobi-type} can be written as
\begin{gather} \label{oper-l1}
q_k^{\beta,M}(t) = \left[ M - \big(1-t^2\big) \dfrac{d^2}{dt^2} -(\beta+1)(1-t) \dfrac{d}{dt} \right] P_k^{(0,\beta)}(t).
\end{gather}
Therefore, both families of orthogonal polynomials are connected by means of a~second-order
linear dif\/ferential operator whose polynomial coef\/f\/icients are independent of the degree
of the polynomials.

If we denote by $d\mu_{\beta}$ the measure def\/ined by
\begin{gather*}
d\mu_{\beta} = \dfrac{1}{2^{\beta+1}}(1+t)^{\beta} dt + \dfrac{1}{M}\delta(t-1),
\end{gather*}
integrating by parts we can deduce that for arbitrary polynomials $f$ and $g$ we get
\begin{gather}
 \int_{-1}^1 g(t) \left[ M - \big(1-t^2\big) \dfrac{d^2}{dt^2} -(\beta+1)(1-t) \dfrac{d}{dt} \right] f(t)
d\mu_{\beta}(t) \label{rel-int}
\\
\qquad{}
= \dfrac{1}{2^{\beta+1}}\int_{-1}^1 f(t) \left[ M + \beta + 1- \big(1-t^2\big) \dfrac{d^2}{dt^2}
-(\beta-1-(\beta+3)t) \dfrac{d}{dt} \right] g(t) (1 + t)^{\beta} dt.\nonumber
\end{gather}

Therefore, using the orthogonality properties of $P_k^{(0,\beta)}(t)$ and $q_k^{\beta,M}(t)$, from
\eqref{oper-l1} and \eqref{rel-int} we deduce
\begin{gather}
 \left[ M + \beta + 1- \big(1-t^2\big) \dfrac{d^2}{dt^2} -(\beta-1-(\beta+3)t) \dfrac{d}{dt} \right]
q_k^{\beta,M}(t) \nonumber \\
 \qquad{} = (M + k(k+\beta))(M + (k+1)(k+\beta+1)) P_k^{(0,\beta)}(t),\label{oper-l2}
\end{gather}
where the coef\/f\/icient of $P_k^{(0,\beta)}(t)$ has been obtained identifying the
leading coef\/f\/icients in both sides of \eqref{oper-l2}.

Finally, if we combine \eqref{oper-l1} and \eqref{oper-l2} we deduce that
the polynomials $q_k^{\beta,M}(t)$ are the eigenfunctions of a~fourth-order
dif\/ferential operator with polynomials coef\/f\/icients not depending on~$k$.
The quantities $(M + k(k+\beta))(M + (k+1)(k+\beta+1))$ are the corresponding
eigenvalues.

Summarizing, we have got the following proposition.

\begin{Proposition}
Let $\mathcal{L}_1$ and $\mathcal{L}_2$ the linear operators defined by
\begin{gather*}
\mathcal{L}_1(f)(t) = \left[ M - \big(1-t^2\big) \dfrac{d^2}{dt^2} -(\beta+1)(1-t) \dfrac{d}{dt} \right] f(t),
\\
\mathcal{L}_2(f)(t) = \left[ M + \beta + 1- \big(1-t^2\big) \dfrac{d^2}{dt^2} -(\beta-1-(\beta+3)t) \dfrac{d}{dt} \right] f(t).
\end{gather*}
Then, the Jacobi-type polynomials $q_k^{\beta,M}(t)$ satisfy
\begin{gather*}
\mathcal{L}_1 P_k^{(0,\beta)}(t) = q_k^{\beta,M}(t),
\\
\mathcal{L}_2 q_k^{\beta,M}(t) = (M + k(k+\beta))(M + (k+1)(k+\beta+1)) P_k^{(0,\beta)}(t),
\end{gather*}
and the fourth-order differential equation
\begin{gather*}
\mathcal{L}_1 \mathcal{L}_2 q_k^{\beta,M}(t) = (M + k(k+\beta))(M + (k+1)(k+\beta+1))q_k^{\beta,M}(t).
\end{gather*}
\end{Proposition}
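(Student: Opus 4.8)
The plan is to read the three displayed identities as a repackaging of the computations \eqref{oper-l1} and \eqref{oper-l2} already carried out above, closed off by one short composition step.

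First I would dispatch $\mathcal{L}_1 P_k^{(0,\beta)} = q_k^{\beta,M}$, which is exactly \eqref{oper-l1}. Starting from the definition \eqref{jacobi-type}, I use the Jacobi differential equation \eqref{diffeqJ} with $\alpha=0$ to rewrite $k(k+\beta+1)P_k^{(0,\beta)}$ as $-\big[(1-t^2)\frac{d^2}{dt^2} + (\beta-(\beta+2)t)\frac{d}{dt}\big]P_k^{(0,\beta)}$. Adding the first-order term $-(1+t)\frac{d}{dt}$ already present in \eqref{jacobi-type} to the first-order term $-(\beta-(\beta+2)t)\frac{d}{dt}$ produced by the substitution collapses to $-(\beta+1)(1-t)\frac{d}{dt}$, and the operator $\mathcal{L}_1$ emerges.

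Next I would prove $\mathcal{L}_2 q_k^{\beta,M} = c_k P_k^{(0,\beta)}$ with $c_k = (M+k(k+\beta))(M+(k+1)(k+\beta+1))$, i.e.\ \eqref{oper-l2}. The engine is the adjointness relation \eqref{rel-int}, obtained by integrating $\mathcal{L}_1$ by parts twice against $d\mu_\beta$. Specializing \eqref{rel-int} to $g=q_k^{\beta,M}$ identifies $\frac{1}{2^{\beta+1}}\int_{-1}^1 f\,(\mathcal{L}_2 q_k^{\beta,M})(1+t)^\beta\,dt$ with $(q_k^{\beta,M},\mathcal{L}_1 f)_\beta^M$. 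Both $\mathcal{L}_1$ and $\mathcal{L}_2$ preserve degree, so for any $f$ of degree $<k$ the polynomial $\mathcal{L}_1 f$ again has degree $<k$, whence $(q_k^{\beta,M},\mathcal{L}_1 f)_\beta^M=0$ by orthogonality of $q_k^{\beta,M}$ in $(\cdot,\cdot)_\beta^M$. Thus $\mathcal{L}_2 q_k^{\beta,M}$ is a polynomial of degree $k$ orthogonal with respect to $(1+t)^\beta\,dt$ to all lower-degree polynomials, forcing $\mathcal{L}_2 q_k^{\beta,M}=c_k P_k^{(0,\beta)}$. Since $q_k^{\beta,M}=\mathcal{L}_1 P_k^{(0,\beta)}$, the constant $c_k$ follows from leading coefficients: $\mathcal{L}_1$ multiplies the top coefficient of a degree-$k$ polynomial by $M+k(k+\beta)$ and $\mathcal{L}_2$ by $M+(k+1)(k+\beta+1)$, and the product of these two factors is precisely $c_k$.

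The fourth-order equation is then immediate: applying the linear operator $\mathcal{L}_1$ to $\mathcal{L}_2 q_k^{\beta,M}=c_k P_k^{(0,\beta)}$ gives $\mathcal{L}_1\mathcal{L}_2 q_k^{\beta,M}=c_k\,\mathcal{L}_1 P_k^{(0,\beta)}=c_k\,q_k^{\beta,M}$, the last equality reusing the first identity. I expect the only real obstacle to sit in \eqref{rel-int}: because $\alpha=0$, the Jacobi weight $(1+t)^\beta$ does not vanish at $t=1$, so the double integration by parts leaves nontrivial boundary terms there, and these must cancel exactly against the Dirac-mass contribution of $d\mu_\beta$. Concretely, since $(1-t^2)$ and $(1-t)$ vanish at $t=1$ one has $(\mathcal{L}_1 f)(1)=Mf(1)$, so the point mass contributes $\frac{1}{M} g(1)(\mathcal{L}_1 f)(1)=f(1)g(1)$, and the role of the boundary terms is to supply the compensating $-f(1)g(1)$ that clears the endpoint and leaves the clean interior integral against $\mathcal{L}_2 g$. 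Once this cancellation and the leading-coefficient bookkeeping are in place, the three identities follow formally.
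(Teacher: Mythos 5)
Your proposal is correct and takes essentially the same route as the paper: it derives $\mathcal{L}_1 P_k^{(0,\beta)}=q_k^{\beta,M}$ from \eqref{jacobi-type} via the Jacobi equation \eqref{diffeqJ} (i.e., \eqref{oper-l1}), uses the integration-by-parts identity \eqref{rel-int} together with orthogonality and leading-coefficient matching to get \eqref{oper-l2}, and composes the two relations. The only difference is expository: you spell out the degree bookkeeping and the cancellation of the endpoint boundary term against the point-mass contribution via $(\mathcal{L}_1 f)(1)=Mf(1)$, details the paper leaves implicit, and your account of them checks out.
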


\section{A fourth-order partial dif\/ferential equation}\label{section5}

Let us return to the $d$-dimensionall ball. For $n \in \mathbb{N}_0$ and
$0 \leqslant k \leqslant n/2$, let $\{Y_\nu^{n-2k}(x)\colon 1\leqslant \nu\leqslant a_{n-2k}^d\}$
denote an orthonormal basis for $\mathcal{H}_{n-2k}^d$ and $\beta_k = n-2k + \frac{d-2}{2}$.
According to Proposition~\ref{prop-2.1}, in spherical polar coordinates $x = r \xi$ the
polynomials
\begin{gather*}
P_{k,\nu}^{n}(x) = P_{k}^{(0, \beta_k)}\big(2r^2 -1\big) r^{n-2k} Y_\nu^{n-2k}(\xi),
\end{gather*}
with $0 \leqslant k \leqslant n/2$, $1 \leqslant \nu \leqslant a_{n-2k}^d$ is a mutually
orthogonal basis of $\mathcal{V}_n^d(W_{1/2})$.
From Theorem~\ref{thm-3.1}, we deduce that the polynomials
\begin{gather} \label{baseQ0}
Q_{k,\nu}^{n}(x) = q_{k}^{\beta_k,M}\big(2r^2 -1\big) r^{n-2k} Y_\nu^{n-2k}(\xi),
\end{gather}
with $0 \leqslant k \leqslant n/2$, $1 \leqslant \nu \leqslant a_{n-2k}^d$
constitute a mutually orthogonal basis of $\mathcal{V}_n^d(W_{1/2},\frac{d}{2M})$.

First, we use the change of variable $t = 2r^2 -1$ to write relation \eqref{oper-l1} as
\begin{gather} \label{oper-lr1}
q_k^{\beta_k,M}\big(2r^2-1\big) = \left[ M - \dfrac{1-r^2}{4}\left( \dfrac{(2\beta_k+1)}{r} \dfrac{d}{dr} + \dfrac{d^2}{dr^2}\right)\right] P_k^{(0,\beta_k)}\big(2r^2-1\big),
\end{gather}
and relation \eqref{oper-l2} gives
\begin{gather}
 \left[ M + \beta_{{k}} + 1 - \dfrac{1-r^2}{4}\left( \dfrac{(2\beta_k+1)}{r} \dfrac{d}{dr} + \dfrac{d^2}{dr^2}\right) + r \dfrac{d}{dr} \right]
q_k^{\beta_k,M}\big(2r^2-1\big) \nonumber \\
 \qquad{} = (M + k(k+\beta_k))(M + (k+1)(k+\beta_k+1)) P_k^{(0,\beta_k)}\big(2r^2-1\big). \label{oper-lr2}
\end{gather}

Next, we introduce the factor $r^{n-2k}$ in \eqref{oper-lr1} and \eqref{oper-lr2}.
After a tedious but straightforward computation we obtain the linear operators
\begin{gather*}
\mathcal{M}_1 = M -\frac{1-r^2}{4} \left( \frac{d^2}{d r^2} + \frac{d-1}{r}\frac{d}{d r}
- \frac{(n-2k+d-2)(n-2k)}{r^2}\right), \\
\mathcal{M}_2 = M -\frac{1-r^2}{4} \left( \frac{d^2}{d r^2} + \frac{d-1}{r}\frac{d}{d r}
- \frac{(n-2k+d-2)(n-2k)}{r^2}\right) + \frac{d}{2} + r \frac{d}{d r},
\end{gather*}
which satisfy
\begin{gather}
\mathcal{M}_1 \big( r^{n-2k} P_k^{(0,\beta_k)}(2r^2-1)\big) = r^{n-2k} q_k^{\beta_k,M}\big(2r^2-1\big), \label{rel-M1}\\
\mathcal{M}_2 \big( r^{n-2k} q_k^{\beta_k,M}(2r^2-1)\big) = \Lambda_{n,k} r^{n-2k} P_k^{(0,\beta_k)}\big(2r^2-1\big), \label{rel-M2}
\end{gather}
where
\begin{gather*}
\Lambda_{n,k} = (M + k(k+\beta_k))(M + (k+1)(k+\beta_k+1)) \\
 \hphantom{\Lambda_{n,k}}{} = \left(M + k\left(n-k+\frac{d-2}{2}\right)\right)\left(M + (k+1)\left(n-k+\frac{d}{2}\right)\right).
\end{gather*}

\looseness=-1
Now, we can recognize the value $-(n-2k+d-2)(n-2k)$ as the eigenvalue of the Laplace--Beltrami
operator $\Delta_0$ corresponding to the spherical harmonic $Y_{\nu}^{n-2k}(\xi)$.
Therefore, if we multiply in~\eqref{rel-M1} and~\eqref{rel-M2} by $Y_{\nu}^{n-2k}(\xi)$,
relations~\eqref{L-B-operator},~\eqref{x-nabla}, and the change of variable $x = r \xi$ give
\begin{gather*}
 \left[M - \frac{1}{4}\big(1-\|x\|^2\big) \Delta\right]P_{k,\nu}^{n}(x) =Q_{k,\nu}^{n}(x), \\
 \left[M + \frac{d}{2} - \frac{1}{4}\big(1-\|x\|^2\big) \Delta + \langle x, \nabla \rangle \right]Q_{k,\nu}^{n}(x) = \Lambda_{n,k} P_{k,\nu}^{n}(x).
\end{gather*}

Finally, if we combine the previous relations we deduce that
the polynomials $Q_{k,\nu}^{n}(x)$ are the eigenfunctions of a fourth-order
partial dif\/ferential operator where the polynomial coef\/f\/icients do not depend on $n$ or $k$.
The quantities $\Lambda_{n,k}$ are the corresponding eigenvalues.

In conclusion, we have shown the following theorem.
\begin{Theorem}
Let $Q_{k,\nu}^{n}(x) = q_{k}^{\beta_k,M}(2r^2 -1) r^{n-2k} Y_\nu^{n-2k}(\xi)$
be the polynomials defined in \eqref{baseQ0} for
$0 \leqslant k \leqslant n/2$, $1 \leqslant \nu \leqslant a_{n-2k}^d$, wich
constitute a mutually orthogonal basis of $\mathcal{V}_n^d(W_{1/2},\frac{d}{2M})$.
Then, the polynomials $Q_{k,\nu}^{n}(x)$ satisfy the relations
\begin{gather*}
 \left[M - \frac{1}{4}\big(1-\|x\|^2\big) \Delta\right]P_{k,\nu}^{n}(x) =Q_{k,\nu}^{n}(x), \\
 \left[M + \frac{d}{2} - \frac{1}{4}\big(1-\|x\|^2\big) \Delta +
\langle x, \nabla \rangle \right]Q_{k,\nu}^{n}(x) = \Lambda_{n,k} P_{k,\nu}^{n}(x),
\end{gather*}
and the fourth-order partial differential equation
\begin{gather*}
\left[M - \frac{1}{4}\big(1-\|x\|^2\big) \Delta\right]\left[M + \frac{d}{2} - \frac{1}{4}\big(1-\|x\|^2\big) \Delta +
\langle x, \nabla \rangle \right]Q_{k,\nu}^{n}(x) = \Lambda_{n,k} Q_{k,\nu}^{n}(x),
\end{gather*}
where
\begin{gather*}
\Lambda_{n,k} = \left(M + k\left(n-k+\frac{d-2}{2}\right)\right)\left(M + (k+1)\left(n-k+\frac{d}{2}\right)\right).
\end{gather*}
\end{Theorem}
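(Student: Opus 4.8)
The plan is to lift the two second-order identities for the univariate Krall Jacobi-type polynomials to the ball by separating variables in spherical-polar coordinates. The Proposition in Section~\ref{section4} already records that $\mathcal{L}_1 P_k^{(0,\beta)} = q_k^{\beta,M}$ and $\mathcal{L}_2 q_k^{\beta,M} = (M+k(k+\beta))(M+(k+1)(k+\beta+1))P_k^{(0,\beta)}$, so the whole content of the theorem is obtained by transporting these two relations, with $\beta = \beta_k$, from the variable $t$ to the radial variable $r$ via $t = 2r^2-1$, and then to $x = r\xi$ by attaching a spherical harmonic.

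First I would rewrite \eqref{oper-l1} and \eqref{oper-l2} in the variable $r$. Since $t = 2r^2-1$ gives $\frac{d}{dt} = \frac{1}{4r}\frac{d}{dr}$ and $1-t^2 = 4r^2(1-r^2)$, the operators $\mathcal{L}_1,\mathcal{L}_2$ collapse to the radial operators in \eqref{oper-lr1} and \eqref{oper-lr2}, both assembled from the combination $\frac{2\beta_k+1}{r}\frac{d}{dr} + \frac{d^2}{dr^2}$. The crux is then to introduce the prefactor $r^{n-2k}$, that is, to conjugate the radial operator by multiplication by $r^{n-2k}$. Writing $m = n-2k$ and using $2\beta_k+1 = 2m+d-1$, a direct computation of $r^{m}\big(\frac{d^2}{dr^2} + \frac{2\beta_k+1}{r}\frac{d}{dr}\big)r^{-m}$ produces $\frac{d^2}{dr^2} + \frac{d-1}{r}\frac{d}{dr} - \frac{m(m+d-2)}{r^2}$, while $r^{m}\big(r\frac{d}{dr}\big)r^{-m} = r\frac{d}{dr} - m$ and $\beta_k+1-m = \frac{d}{2}$. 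These bookkeeping identities are exactly what yields $\mathcal{M}_1,\mathcal{M}_2$ together with the relations \eqref{rel-M1} and \eqref{rel-M2}. This conjugation is the one genuinely computational point and the main obstacle: the centrifugal coefficient must come out to precisely $-m(m+d-2) = -(n-2k)(n-2k+d-2)$, since this is the value that will match the Laplace--Beltrami eigenvalue and allow the radial operator to complete to the full Laplacian.

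Finally I would multiply \eqref{rel-M1} and \eqref{rel-M2} by the spherical harmonic $Y_\nu^{n-2k}(\xi)$. By \eqref{eigen} the centrifugal term acts as $-\frac{m(m+d-2)}{r^2}Y_\nu^{n-2k} = \frac{1}{r^2}\Delta_0 Y_\nu^{n-2k}$, so the decomposition \eqref{L-B-operator} identifies the whole radial operator with $\Delta$ acting on $r^{m}(\cdots)Y_\nu^{n-2k}$; together with \eqref{x-nabla} for $r\frac{d}{dr} = \langle x,\nabla\rangle$ and the identity $1-r^2 = 1-\|x\|^2$, this produces the two stated relations between $P_{k,\nu}^{n}$ and $Q_{k,\nu}^{n}$. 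Applying the operator $\big[M - \frac14(1-\|x\|^2)\Delta\big]$ to the second relation, whose right-hand side is already $\Lambda_{n,k}P_{k,\nu}^{n}$, and invoking the first relation once more by linearity, gives $\Lambda_{n,k}Q_{k,\nu}^{n}$; this is the asserted fourth-order equation. The eigenvalue $\Lambda_{n,k}$ is simply the product of the two one-dimensional eigenvalues, rewritten through $\beta_k = n-2k+\frac{d-2}{2}$ so that $k+\beta_k = n-k+\frac{d-2}{2}$ and $k+\beta_k+1 = n-k+\frac{d}{2}$.
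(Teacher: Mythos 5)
Your proposal is correct and follows essentially the same route as the paper: the change of variable $t=2r^2-1$ giving \eqref{oper-lr1}--\eqref{oper-lr2}, the conjugation by $r^{n-2k}$ producing $\mathcal{M}_1$ and $\mathcal{M}_2$ with the centrifugal term $-(n-2k)(n-2k+d-2)/r^2$, the identification of that term with the Laplace--Beltrami eigenvalue via \eqref{eigen} and \eqref{L-B-operator}, and the composition of the two second-order relations. You even make explicit the conjugation computation (including $\beta_k+1-(n-2k)=d/2$) that the paper dismisses as ``tedious but straightforward.''
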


\section{Conclusions}\label{section6}

The central symmetry of the measure def\/ining the inner product
\eqref{eq:main-ip} is the key to
construct explicitly a sequence of mutually orthogonal polynomials
using spherical harmonics and polar coordinates. These orthogonal
polynomials depend on a family of polynomials in one variable.
The latter ones are orthogonal with respect to a modif\/ication of the measure
that is the sum of a classical Jacobi measure plus one point mass located
at the end point of the interval.

In the case $\mu = 1/2$, the polynomials in the radial
parts of $\{Q^n_{k,\nu}(x)\}$ are identif\/ied as the
Krall's Jacobi-type polynomials, which satisfy a~fourth-order dif\/ferential
equation. Next, using some properties of spherical harmonics we obtained the explicit
expression of a fourth-order partial dif\/ferential
operator having the multivariate orthogonal polynomials as eigenfunctions.
The corresponding eigenvalues depend on the polynomials indices.

In the general case, the polynomials in the radial part are a particular case of the
polynomials studied by J.~Koekoek and R.~Koekoek in~\cite{Koekoek2000}. These authors
proved that the polynomials satisfy a~linear dif\/ferential
equation of a specif\/ic form. They gave explicit expressions for the coef\/f\/icients and showed
that this dif\/ferential equation is always of inf\/inite order except if
$\mu = \alpha + 1/2$, with~$\alpha$ a~nonnegative integer, then the order is equal to
$2\alpha + 4$. Closely connected is the work by K.H.~Kwon, L.L.~Littlejohn and G.J.~Yoon
in \cite{KLY2006}.

These results suggest the possible existence of higher-order partial dif\/ferential operator
having the multivariate polynomials $\{Q^n_{k,\nu}(x)\}$ as eigenfunctions
in the case $\mu = \alpha + 1/2$, with~$\alpha$ a~nonnegative integer. This fact
deserves further research.

\subsection*{Acknowledgements}

We gratefully thanks the anonymous referees for their valuable comments and suggestions.
This work has been partially supported by DGICYT, Ministerio de Econom\'ia y Competitividad
(MINECO) of Spain and the European Regional Development Fund (ERDF) through grants MTM2011--28952--C02--02,
MTM2014--53171--P, Reseach Project P11-FQM-7276 and Research Group FQM-384 from Junta de Andaluc\'{\i}a.

\pdfbookmark[1]{References}{ref}
\LastPageEnding

\end{document}